\documentclass[a4paper,12pt,reqno]{amsart}
\textwidth=440pt \evensidemargin=0pt \oddsidemargin=10pt

\usepackage{amsmath, epsfig, cite}
\usepackage{cases}
\usepackage{amssymb}
\usepackage{amsfonts}
\usepackage{latexsym}
\usepackage{amsthm}
\usepackage{url}
\usepackage{color}
\usepackage{comment}
\usepackage{a4wide}
\usepackage{enumerate}
\usepackage{bbm}

\newtheorem{lemma}{Lemma}[section]

\newtheorem{thm}{Theorem}[section]

\theoremstyle{definition}

\numberwithin{equation}{section}

\allowdisplaybreaks[1]

\begin{document}
\title{On pairs of one prime,four prime cubes \\and powers of 2}
\author{XIN CHEN}

 \keywords{Circle method, Waring-Goldbach problem, Estimate of exponential sum ,powers of 2.}
 \newcommand{\mods}[1]{\,(\mathrm{mod}\,{#1})}
\begin{abstract}
In this paper, we consider the simultaneous representation of pairs of sufficiently large integers. We prove that every pair of large positive odd integers can be represented in the form of a pair of one prime, four cubes of primes and 231 powers of 2.
\end{abstract}
\dedicatory{School of Mathematical Science,\\ Shanghai Jiao Tong University\\Shanghai 200240,P.R.China\\chen3jin@sjtu.edu.cn}

\maketitle

\section{\bf Introduction }

The Goldbach conjecture is one of the most famous problems and there are many variations derived from the conjecture.
In 1951, Linnik \cite{1951l} proved under the assumption of the Generalized Riemann Hypothesis (GRH) that every large even integer $N$ can be written as the sum of two primes and finite number of powers of 2, and later in 1953 \cite{1953l} he proved this conjecture unconditionally: that is
\begin{equation}
N = p_{1}+p_{2}+2^{v_{1}}+2^{v_{2}}+\cdots+2^{v_{k}}.
\end{equation}
The explicit value of the number $k$ was first obtained by Liu, Liu and Wang \cite{1998w}, in which $k=54000$ is acceptable. Afterwards, several researchers improved the value of $k$ and in 2002 Heath-Brown and Puchta \cite{2002h} showed that $k=13$ and ,under the GRH, $k=7$.

In 1999, Liu,Liu and Zhan \cite{1999l} proved every large even integer $N$ can be written as a sum of four squares of primes and $k_{1}$ powers of 2,
\begin{equation}
N = p_{1}^{2}+p_{2}^{2}+p_{3}^{2}+p_{4}^{2}+2^{v_{1}}+2^{v_{2}}+\cdots+2^{v_{k_{1}}}.
\end{equation}
In 2001, Liu and Liu \cite{2001l} proved that every large even integer $N$ can be written as a sum of eight cubes of primes and $k_{2}$ powers of 2,
\begin{equation}
N = p_{1}^{3}+p_{2}^{3}+\cdots+p_{8}^{3}+2^{v_{1}}+2^{v_{2}}+\cdots+2^{v_{k_{2}}}.
\end{equation}
Other problems with hybrid powers of primes of (1.1-3) have been studied by Liu, Liu and Zhan \cite{1999l}, Li \cite{2007l}, Liu and L{\"u} \cite{2011l1} and Liu and L{\"u} \cite{2011l2}. The detail is omitted.

In 2013, Kong \cite{2013k} studied a simultaneous version of the Goldbach-Linnik problem. Kong \cite{2013k} proved that the simultaneous equations
\begin{equation} \label{1}
\begin{cases}
   N_{1} = p_{1}+p_{2}+2^{v_{1}}+2^{v_{2}}+\cdots+2^{v_{k}}\\
   N_{2} = p_{3}+p_{4}+2^{v_{1}}+2^{v_{2}}+\cdots+2^{v_{k}}
\end{cases}
\end{equation}
are solvable for every pair of sufficiently large positive even integers $N_{1}$, $N_{2}$ satisfying $N_{2}\gg N_{1}\geq N_{2}$ for $k=63$ in general and for $k=31$ under the GRH. Then the result was improved by Kong \cite{2017k} in 2017, which showed that the simultaneous equations (\ref{1}) can be solvable for $k=34$ unconditionally and $k=18$ under the GRH. In 2013, Liu \cite{2013l} first considered the result on simultaneous representation of pairs of positive odd integers $N_{2}\gg N_{1}\geq N_{2}$, in the form
\begin{equation} \label{2}
\begin{cases}
   N_{1} = p_{1}+p_{2}^{2}+p_{3}^{2}+2^{v_{1}}+2^{v_{2}}+\cdots+2^{v_{k_{1}}}\\
   N_{2} = p_{4}+p_{5}^{2}+p_{6}^{2}+2^{v_{1}}+2^{v_{2}}+\cdots+2^{v_{k_{1}}}
\end{cases}
\end{equation}
He proved that the equations (\ref{2}) are solvable for $k=332$, and then the consequence was advanced by Hu and Yang \cite{2016h} for $k_{2}=128$. In 2017, Hu and Yang \cite{2017h} proved every pair of large integers with $N_{2}\gg N_{1}\geq N_{2}$ can be represented in the form
\begin{equation} \label{3}
\begin{cases}
   N_{1} = p_{1}+p_{2}^{2}+p_{3}^{3}+p_{4}^{3}+2^{v_{1}}+2^{v_{2}}+\cdots+2^{v_{k_{2}}}\\
   N_{2} = p_{5}+p_{6}^{2}+p_{7}^{3}+p_{8}^{3}+2^{v_{1}}+2^{v_{2}}+\cdots+2^{v_{k_{2}}}
\end{cases}
\end{equation}
for $k_{2}=455$. Cai and Hu \cite{2020c} improved $k_{2}$ to 187 in 2020.

In this paper, we consider a simultaneous representation of the problem studied by Liu and L{\"u} in \cite{2011l2}. We will show that each pair of large odd integers with $N_{2}\gg N_{1}\geq N_{2}$ can be written in the form
\begin{equation} \label{4}
\begin{cases}
   N_{1} = p_{1}+p_{2}^{3}+p_{3}^{3}+p_{4}^{3}+p_{5}^{3}+2^{v_{1}}+2^{v_{2}}+\cdots+2^{v_{k}}\\
   N_{2} = p_{6}+p_{7}^{3}+p_{8}^{3}+p_{9}^{3}+p_{10}^{3}+2^{v_{1}}+2^{v_{2}}+\cdots+2^{v_{k}}.
\end{cases}
\end{equation}
where $p_{i}$ is prime and $k$ is a positive integer. My result is stated as follows.

\begin{thm}
For $k=231$, the simultaneous equations (\ref{4}) are solvable for every pair of sufficiently large positive odd  integers $N_{2}\gg N_{1}\geq N_{2}$.
\end{thm}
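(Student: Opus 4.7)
I intend to use the Hardy--Littlewood circle method on the two-dimensional torus $[0,1]^2$, combined with the Pintz--Ruzsa treatment of the powers of $2$. Writing $L = \lfloor \log_2 N_1 \rfloor$ and $e(t) = e^{2\pi i t}$, define
\[
 f_i(\alpha) = \sum_{p \le N_i}(\log p)\, e(\alpha p), \quad g_i(\alpha) = \sum_{p \le N_i^{1/3}}(\log p)\, e(\alpha p^3), \quad K(\alpha) = \sum_{1 \le v \le L} e(\alpha \cdot 2^v).
\]
Since each $2^{v_j}$ appears in both equations of \eqref{4}, the weighted representation count becomes
\[
 R(N_1, N_2) = \int_{[0,1]^2} f_1(\alpha_1) g_1(\alpha_1)^4 f_2(\alpha_2) g_2(\alpha_2)^4 K(\alpha_1+\alpha_2)^k \, e(-N_1\alpha_1 - N_2\alpha_2) \, d\alpha_1\, d\alpha_2,
\]
which I dissect into major arcs $\mathfrak{M}$ (pairs whose coordinates are simultaneously close to rationals of denominator at most $P = (\log N_1)^A$) and the complementary minor arcs $\mathfrak{m}$.

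On $\mathfrak{M}$, the Siegel--Walfisz theorem for $f_i$ and its cubic analogue for $g_i$ let the integrand factor, up to admissible error, into one-dimensional Waring--Goldbach pieces, giving
\[
 \int_{\mathfrak{M}} = \mathfrak{S}(N_1, N_2)\, J(N_1, N_2)\, L^k\,\bigl(1 + o(1)\bigr),
\]
where a local analysis (with the only obstruction at $p = 2$) forces $\mathfrak{S} > 0$ under the parity hypothesis that both $N_1$ and $N_2$ are odd, and the singular integral $J$ has the expected size $\gg N_1^{2/3} N_2^{2/3}$.

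For the minor arcs I adapt the scheme of Liu--L\"u \cite{2011l2} to two variables. The key input is the Pintz--Ruzsa estimate: for every $\eta > 0$ there is a set $\mathcal{E}(\eta) \subset [0,1]$ of measure $\ll L^{-A(\eta)}$ off which $|K(\alpha)| \le \eta L$. Coupling this pointwise bound on $|K(\alpha_1+\alpha_2)|^k$ with (i) the Hua-type eighth moment of $g_i$ over primes, (ii) the Vinogradov minor-arc bound $\sup_{\alpha\in\mathfrak{m}}|f_i(\alpha)| \ll N_i^{1-\delta}$, and (iii) Cauchy--Schwarz in each of $\alpha_1$, $\alpha_2$, and the translate $\beta = \alpha_1 + \alpha_2$, should yield a total minor-arc contribution of $o(\mathfrak{S} J L^k)$. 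Combined with the major-arc asymptotic this gives $R(N_1, N_2) > 0$.

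The main obstacle is the numerical balance needed to bring $k$ down to $231$. The Pintz--Ruzsa parameter $\eta$, the major-arc height $P$, and the partition of $\mathfrak{m}$ according to which of $\alpha_1$, $\alpha_2$, $\alpha_1 + \alpha_2$ lies near a low-height rational must be tuned jointly, and the saving $A(\eta)$ must be weighed against the loss from the minor-arc moment estimates for $f_i$ and $g_i$. The genuinely two-dimensional step is the region where $\alpha_1 + \alpha_2$ sits on a minor arc while $\alpha_1$ alone does not; there the change of variable $\beta = \alpha_1 + \alpha_2$ coupled with a Bessel-type inequality on the residual $\alpha_1$-integration is required. Once the minor-arc bound is matched against the major-arc lower bound with numerical constants consistent with $k = 231$, the theorem follows.
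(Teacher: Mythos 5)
Your outline reproduces the general shape of the paper's argument (a two-dimensional circle method, a dissection according to whether the powers-of-two sum $|K(\alpha_1+\alpha_2)|$ is large, and Cauchy--Schwarz on the minor arcs), but several of the steps you propose would not close, and these are not merely matters of ``numerical tuning left for later.'' First, your arc dissection is internally inconsistent: with major arcs of height $P=(\log N_1)^A$ handled by Siegel--Walfisz, the complementary set contains points $\alpha=a/q+\theta$ with $(\log N_1)^A<q<N_1^{c}$ for any small $c>0$, where Vinogradov's bound $Nq^{-1/2}+N^{4/5}+N^{1/2}q^{1/2}$ saves nothing over the trivial estimate. The paper instead takes $P_i=N_i^{1/9-2\varepsilon}$ and imports the enlarged-major-arc asymptotic of Liu and L\"u \cite{2011l2}, precisely so that the Vinogradov and Kumchev estimates yield genuine power savings $N_i^{-1/18}$ and $N_i^{-1/42}$ on all of $\mathfrak{m}_i$. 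Second, you state the exceptional-set bound for $K$ as $\mathrm{meas}(\mathcal{E}(\eta))\ll L^{-A(\eta)}$; as stated this is far too weak. What is needed, and what the Heath-Brown--Puchta machinery supplies, is a power of $N$: $\mathrm{meas}(\sigma_\lambda)\ll N^{-E(\lambda)}$ with $E(0.961917)>113/126$, because in bounding $R_2$ this measure must combine with the exponent saving $1/18+1/21=13/126$ from the sup bounds to recover a full power of $N_1$; a logarithmic saving leaves the contribution larger than the main term. Third, a ``Hua-type eighth moment'' of $g_i$ carries an $N^{\varepsilon}$ loss, which is fatal in a problem where the only surplus over the main term is a factor $\lambda^{k}$ with $\lambda$ fixed and $k$ finite: no choice of $k$ beats $N^{\varepsilon}$. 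The paper avoids this by splitting the four cubes into two dyadic ranges $p\sim U_i$ and $p\sim V_i=U_i^{5/6}$ (the Liu--L\"u device) and invoking Ren's explicit, $\varepsilon$-free bound $\rho(n)\le 268096\,U_iV_i^4(\log N_i)^{-8}$, which gives $\int_0^1|S_i(\alpha)T_i(\alpha)|^4\,d\alpha\le 0.359127\,N_i^{13/9}$.

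Beyond these structural issues, the theorem asserts not that some finite $k$ works but that $k=231$ works, and everything that produces $231$ is absent from your sketch: the explicit lower bounds $\mathfrak{S}(n)\ge 0.8842495063$ and $\mathfrak{J}_i(n)\ge 2.7335671\,N_i^{11/9}$ on the major arcs, Kong's constant $305.8869$ in the fourth moment of $f_i$ twisted by $G^4$, the constant in Ren's bound just quoted, and the specific choice $\lambda=0.961917$. These feed into the final inequality $0.00089051>6.2809957\,\lambda^{k-2}$, which is exactly where $k=231$ comes from. Acknowledging that the parameters ``must be tuned jointly'' does not establish the quantitative statement; without carrying out that computation the proof is incomplete.
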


\section{\bf Outline of the method }
In this section, we give an outline for the proof of Theorem 1.1.

In order to apply the circle method, we set
\begin{equation}
P_{i}=N_{i}^{1/9-2\varepsilon}, \qquad  Q_{i}=N_{i}^{8/9+\varepsilon}\notag
\end{equation}
for $i=1,2$. By Dirichlet's lemma in \cite{1997v}, each $\alpha\in [1/Q,1+1/Q]$ can be written in the form
\begin{equation}
\alpha = a/q+\theta,\qquad |\theta|\leq1/qQ\notag
\end{equation}
for some integers $a$, $q$ with $1\leq a\leq q\leq Q$ and $(a,q)=1$. According to the lemma, for any integers $a_{i}$, $q_{i}$ satisfying
\begin{equation}
1\leq a_{i}\leq q_{i}\leq P_{i},\ \ (a_{i}, q_{i})=1,\notag
\end{equation}
we define the major arcs $\mathfrak{M}_{i}$ and minor ares $\mathfrak{m}_{i}$ as usual, namely
\begin{equation}
\mathfrak{M}_{i}=\bigcup\limits_{q_{i}\leq P_{i}}\bigcup\limits_{\substack{1\leq a_{i}\leq q_{i} \\ (a_{i},q_{i})=1}}\mathfrak{M}_{i}(a_{i},q_{i}),\quad \mathfrak{m}_{i}=\left [\frac{1}{Q_{i}},1+\frac{1}{Q_{i}}\right ]\backslash\mathfrak{M}_{i},\notag
\end{equation}
where $i=1,2$ and
\begin{equation}
\mathfrak{M}_{i}(a_{i},q_{i})=\left \{\alpha_{i}:\left |\alpha_{i}-\frac{a_{i}}{q_{i}}\right |\leq \frac{1}{q_{i}Q_{i}} \right \}\notag
\end{equation}
It follows from $2P_{i}\leq Q_{i}$ that every pair of the arcs $\mathfrak{M}_{i}$ is mutually disjoint respectively. We further define
\begin{equation}
\mathfrak{M}=\mathfrak{M}_{1}\times\mathfrak{M}_{2}=\{(\alpha_{1},\alpha_{2}):\alpha_{1}\in \mathfrak{M}_{1},\alpha_{2}\in \mathfrak{M}_{2}\},\ \mathfrak{m}=\left [\frac{1}{Q_{i}},1+\frac{1}{Q_{i}}\right ]^2\setminus\mathfrak{M}.\notag
\end{equation}

As the value in \cite{2011l2}, let $\delta=10^{-4}$, and
\begin{equation}\label{2.1}
U_{i}=\left (\frac{N_{i}}{16(1+\delta)} \right )^{1/3},\quad V_{i}=U_{i}^{5/6}, \quad L=\log_{2}\left (\frac{N_{1}}{\log N_{1}}\right )
\end{equation}
for $i=1,2$. Let $\omega$ be a small constant, we set
\begin{equation}\label{10}
\begin{aligned}
& f_{i}(\alpha)=\sum\limits_{\omega N_{i}<p\leq N_{i}}(\log p)e(p\alpha),\quad S_{i}(\alpha)=\sum\limits_{p\sim U_{i}}(\log p)e(p^{3}\alpha), \\ & T_{i}(\alpha)=\sum\limits_{p\sim V_{i}}(\log p)e(p^{3}\alpha),\ G(\alpha)=\sum\limits_{v\leq L}e(2^{v}\alpha),\ \sigma_{\lambda}(\alpha)=\{\alpha\in[0,1]:\ |G(\alpha)|\geq\lambda L\}\\ &
\sigma_{\lambda}(\alpha_{1},\alpha_{2})=\{(\alpha_{1},\alpha_{2})\in[0,1]^{2}:\ |G(\alpha_{1}+\alpha_{2})|\geq\lambda L\},
\end{aligned}
\end{equation}
where $i=1,2$, $e(x):=\exp(2\pi ix)$.

Let
\begin{equation}
R(N_{1},N_{2})=\sum \log p_{1}\cdots\log p_{10}\notag
\end{equation}
be the weighted number of solutions of (\ref{4}) in $(p_{1}, \cdots ,p_{10}, v_{1},\cdots ,v_{k})$ with
\begin{equation}
\begin{aligned}
&\omega N_{1}<p_{1}\leq N_{1},\ p_{2},p_{3}\sim U_{1},\ p_{4},p_{5}\sim V_{1},\\
&\omega N_{2}<p_{6}\leq N_{2},\ p_{7},p_{8}\sim U_{2},\ p_{9},p_{10}\sim V_{2},\ v_{j}\leq L\notag
\end{aligned}
\end{equation}
for $j=1,2,\cdots,k$. Then $R(N_{1},N_{2})$ can be written as
\begin{equation}
\begin{aligned}
R(N_{1},N_{2}) &
=\int_{0}^{1}\int_{0}^{1}\left(\prod_{i=1}^{2}f_{i}(\alpha_{i})S_{i}^{2}(\alpha_{i})T_{i}^{2}(\alpha_{i})e(-\alpha_{i}N_{i})\right)G^{k}(\alpha_{1}+\alpha_{2})d\alpha_{1}d\alpha_{2}\\
&=\left \{ \iint\limits_{\mathfrak{M}} + \iint\limits_{\mathfrak{m}\bigcap\sigma_{\lambda}(\alpha_{1}+\alpha_{2})}+\iint\limits_{\mathfrak{m}\backslash\sigma_{\lambda}(\alpha_{1}+\alpha_{2})}\right \}\cdots d\alpha_{1}d\alpha_{2}\\
& = R_{1}(N_{1},N_{2})+R_{2}(N_{1},N_{2})+R_{3}(N_{1},N_{2}).\notag
\end{aligned}
\end{equation}

We will prove Theorem 1.1 by estimating the term $R_{1}(N_{1},N_{2})$, $R_{2}(N_{1},N_{2})$ and $R_{3}(N_{1},N_{2})$. For $\chi\mod q$, define
\begin{equation}
\begin{aligned}
&C_{1}(\chi, a)=\sum\limits_{h=1}^{q}\overline{\chi}(h)e\left(\frac{ah}{q}\right),\qquad C_{1}(q,a)=C_{1}(\chi^{0},a),\\
&C_{3}(\chi, a)=\sum\limits_{h=1}^{q}\overline{\chi}(h)e\left(\frac{ah^{3}}{q}\right),\qquad C_{3}(q,a)=C_{3}(\chi^{0},a),\notag
\end{aligned}
\end{equation}
where $C_{1}(q,a)$ is the Ramanujan sum and $C_{1}(q,a)=\mu(q)$, if $(a,q)=1$. If $\chi_{1},\ldots,\chi_{5}$ are characters$\mod q$, then we write
\begin{equation}
\begin{aligned}
B(n,q;\chi_{1},\ldots,\chi_{5})=\sum\limits_{a=1 \atop (a,q)=1}^{q}C_{1}(\chi_{1},a)C_{3}(\chi_{2},a)\ldots C_{3}(\chi_{5},a)e\left(-\frac{an}{q}\right)\notag
\end{aligned}
\end{equation}
and
\begin{equation}\label{5}
\begin{aligned}
B(n,q)=B(n,q;\chi_{0},\ldots,\chi_{0}),\qquad A(n,q)=\frac{B(n,q)}{\varphi^{5}(q)},\qquad \mathfrak{S}(n)=\sum\limits_{q=1}^{\infty}A(n,q).
\end{aligned}
\end{equation}

We define
\begin{equation}\label{Xi}
\Xi(N,k)=\{(1-\eta)N\leq n\leq N:n=N-2^{v_{1}}-\cdots-2^{v_{k}}\}.
\end{equation}
with $k\geq 2$ and $N\equiv 1$(mod 2).

\begin{lemma}\label{6}
For $n\in \Xi(N_{i},k)$, we have
\begin{equation}
\int_{\mathfrak{M}_{i}}f_{i}(\alpha)S_{i}^{2}(\alpha)T_{i}^{2}(\alpha)e(-n\alpha)d\alpha=\frac{1}{3^{4}}\mathfrak{S}(n)\mathfrak{J}_{i}(n)+O(N_{i}^{\frac{11}{9}}L^{-1})\notag
\end{equation}
for $i=1,2$, where $\mathfrak{S}(n)$ is defined in (\ref{5}) and
\begin{equation}
\mathfrak{J}_{i}(n)=\sum\limits_{m_{1}+\cdots+m_{5}=n\atop U_{i}^{3}<m_{2},m_{3}\leq 8U_{i}^{3},V_{i}^{3}<m_{4},m_{5}\leq 8V_{i}^{3}}(m_{2}\ldots m_{5})^{-2/3}\notag
\end{equation}
with
\begin{equation}
\mathfrak{S}(n)\geq 0.8842495063,\qquad \mathfrak{J}_{i}(n)\geq 2.7335671N_{i}^{11/9}.\notag
\end{equation}
\end{lemma}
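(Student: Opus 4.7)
The plan is to carry out a standard major-arc analysis in the circle method. Since the lemma concerns only the single-variable integral over $\mathfrak{M}_i$, the simultaneous structure of the problem plays no role here and the argument is essentially the one used by Liu-L\"u \cite{2011l2} for the corresponding single equation; what must be checked is that their calculation transplants verbatim under the range constraint $n\in\Xi(N_i,k)$.

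On each arc $\mathfrak{M}_i(a,q)$ I would write $\alpha=a/q+\theta$ with $|\theta|\le 1/(qQ_i)$, expand each of $f_i$, $S_i$, $T_i$ into Dirichlet characters modulo $q$, and approximate them by their standard forms. Using Siegel-Walfisz for the linear sum and the explicit formula combined with a log-free zero-density estimate for the cubic sums, I would establish
\begin{equation*}
f_i(a/q+\theta)=\frac{C_1(q,a)}{\varphi(q)}I_i(\theta)+E_f,\qquad S_i(a/q+\theta)=\frac{C_3(q,a)}{\varphi(q)}v(\theta;U_i)+E_S,
\end{equation*}
and similarly for $T_i$, where $I_i(\theta)=\int_{\omega N_i}^{N_i}e(\theta u)\,du$, $v(\theta;U)=\tfrac{1}{3}\sum_{U^3<m\le 8U^3}m^{-2/3}e(\theta m)$, and the error terms are of size $O(N_iL^{-A})$ and $O(U_iL^{-A})$ on $\mathfrak{M}_i$. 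Substituting into the product $f_iS_i^2T_i^2$, the principal-character piece yields
\begin{equation*}
\sum_{q\le P_i}\frac{B(n,q)}{\varphi^{5}(q)}\int_{|\theta|\le 1/(qQ_i)}I_i(\theta)\,v^2(\theta;U_i)\,v^2(\theta;V_i)\,e(-n\theta)\,d\theta,
\end{equation*}
while every mixed term containing at least one of $E_f,E_S,E_T$ must be shown to contribute at most $O(N_i^{11/9}L^{-1})$.

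Next I would complete the singular series by controlling the tail $\sum_{q>P_i}|A(n,q)|\ll P_i^{-1/2+\epsilon}$ via multiplicativity of $B(n,q)$ together with the classical bounds $|C_1(\chi,a)|\le q^{1/2}$ and $|C_3(\chi,a)|\ll q^{1/2+\epsilon}$, and complete the singular integral by extending $\theta$ to all of $\mathbb{R}$; after the change of variables $m_j=p_j^3$ the resulting integral factors as exactly $\tfrac{1}{3^{4}}\mathfrak{J}_i(n)$. For the lower bound on $\mathfrak{S}(n)$ I would expand the Euler product $\mathfrak{S}(n)=\prod_p\chi_p(n)$, verify positivity of each local factor (the hypothesis $n\equiv 1\pmod 2$ handles $p=2$, while $p=3,7,13,\ldots$ are handled by direct computation of cubic residue densities) and bound the infinite product numerically from below by $0.8842495063$ as in \cite{2011l2}. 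For $\mathfrak{J}_i(n)$ the condition $n\in\Xi(N_i,k)$ forces $n\in[(1-\eta)N_i,N_i]$, so the simplex of admissible $(m_2,m_3,m_4,m_5)$ has positive volume, and rescaling by $N_i$ evaluates the integral to a universal positive constant times $N_i^{11/9}$, which is checked against the claimed $2.7335671\,N_i^{11/9}$.

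The main obstacle is the bookkeeping of the error terms: one must show that every one of the $2^{5}-1$ mixed products involving at least one non-principal character term loses enough either from Siegel-Walfisz (for the linear factor) or from a mean-square / fourth-moment argument on the major arcs (for the cubic factors) to stay within the target $O(N_i^{11/9}L^{-1})$. The cubic estimates are the delicate point, because $v(\theta;U)$ has size $\asymp U/L$ and one can only afford one logarithmic saving in total; combining the standard $L^{2}$ estimate for $S_i$ on $\mathfrak{M}_i$ with a pointwise bound on $f_i$ and $T_i$ gives enough. Once this is done the explicit lower bounds reduce to the numerical computations already carried out in \cite{2011l2}, since the singular series and singular integral are identical to those arising in the single-equation problem treated there.
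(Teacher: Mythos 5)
The paper offers no proof of this lemma beyond the single sentence citing Liu and L\"u \cite{2011l2}, and your sketch is precisely an outline of the standard major-arc argument from that reference (character expansion, completion of the singular series and singular integral, and the numerical lower bounds), so the two take essentially the same route. The one caveat worth noting is that with $q$ ranging up to $P_i=N_i^{1/9-2\varepsilon}$ plain Siegel--Walfisz cannot control the linear factor arc-by-arc; the non-principal contributions must be handled in mean over $q$ and $\chi$ via the large-sieve/zero-density machinery, which is what \cite{2011l2} actually does and what your later remarks about mean-value and fourth-moment arguments correctly gesture at.
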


\begin{proof}
The proof of Lemma \ref{6} can be found in \cite{2011l2}.
\end{proof}

\begin{lemma}\label{7}
Let $\Xi(N,k)$ be defined in (\ref{Xi}) with $2\leq k\leq L$ and $N\equiv 1$(mod 2). Then we have
\begin{equation}
\sum\limits_{\substack{n_{1}\in \Xi(N_{1},k)\\ n_{2}\in \Xi(N_{2},k)\\ n_{1}\equiv n_{2}\equiv 1(mod\ 2)}} 1\geq (1-\varepsilon)L^{k}.\notag
\end{equation}
\end{lemma}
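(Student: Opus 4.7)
The plan is to reinterpret the sum in Lemma \ref{7} as a count of $k$-tuples of exponents. By the definition (\ref{Xi}), each $n_i \in \Xi(N_i, k)$ arises as $n_i = N_i - 2^{v_1} - \cdots - 2^{v_k}$ for some $(v_1, \ldots, v_k)$ with $v_j \leq L$, and in the context of (\ref{4}) the tuple is shared between $n_1$ and $n_2$ (this is precisely the reason $G^k(\alpha_1+\alpha_2)$ rather than $G^k(\alpha_1)G^k(\alpha_2)$ appears in the Fourier expansion of $R(N_1,N_2)$ following (\ref{10})). Accordingly, the sum under consideration should be understood as the number of tuples $(v_1, \ldots, v_k)$ with $1 \leq v_j \leq L$ for which the pair $(n_1, n_2) := (N_1 - W, N_2 - W)$, with $W = \sum_{j=1}^{k} 2^{v_j}$, satisfies both (a) $(1-\eta)N_i \leq n_i \leq N_i$ for $i = 1,2$, and (b) $n_1 \equiv n_2 \equiv 1 \pmod 2$.

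First I would show that constraint (a) is essentially vacuous. Using $2^{v_j} \leq 2^L \leq N_1/\log N_1$ gives
\[
0 \leq W = \sum_{j=1}^{k} 2^{v_j} \leq \frac{k N_1}{\log N_1},
\]
which is smaller than $\eta N_i$ for $N_i$ large, since $k$ is fixed and $N_1 \asymp N_2$ under the hypothesis $N_{2}\gg N_{1}\geq N_{2}$. Hence $(1-\eta)N_i \leq n_i \leq N_i$ holds for every $v$-tuple once $N_1$ is large.

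Next, since $N_1$ and $N_2$ are both odd, the requirement (b) collapses to the single congruence $W \equiv 0 \pmod{2}$. Restricting $v_j$ to $\{1,2,\ldots,\lfloor L\rfloor\}$ forces $2^{v_j}$ to be even for every $j$, so $W$ is automatically even; this already produces $\lfloor L\rfloor^k \geq (1-\varepsilon)L^k$ admissible tuples for $N_1$ sufficiently large. (If the convention in (\ref{10}) permits $v_j = 0$ as well, the tuples with no $v_j = 0$ still form an overwhelming majority, differing from $L^k$ only by a factor $(L/(L+1))^k = 1 - O(k/L)$, so the same lower bound survives.)

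The content of the lemma is thus the soft observation that for fixed $k$ the range and parity conditions entering the definition of $\Xi(N_i, k)$ discard only negligibly many exponent tuples. I anticipate no serious obstacle; the only point requiring care is the interpretation of the sum as counting tuples $(v_1,\ldots,v_k)$ with multiplicity (which is the way it actually enters the Fourier expansion of $R(N_1,N_2)$), rather than as a naive count of distinct integer pairs $(n_1,n_2)$, since different orderings of the $v_j$ may give the same pair but are counted separately in the subsequent circle-method analysis.
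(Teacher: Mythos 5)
Your proposal is correct and supplies exactly the details the paper omits (its ``proof'' of Lemma \ref{7} is just the remark that the claim is straightforward): interpreting the sum as a count of exponent tuples $(v_1,\ldots,v_k)$ with multiplicity --- which is forced by how $G^k(\alpha_1+\alpha_2)$ enters $R_1(N_1,N_2)$, and which you rightly flag, since a literal count of distinct pairs $(n_1,n_2)$ would fall far short of $L^k$ --- the range condition is vacuous because $W\le kN_1/\log N_1=o(N_i)$, and the parity condition is automatic because $N_i$ is odd and $W$ is even. The only caveat, which concerns the lemma's statement rather than your argument, is that the loss $(\lfloor L\rfloor/L)^k\ge 1-\varepsilon$ genuinely needs $k/L\to 0$ (true in the application, where $k=231$ is fixed), not the full stated range $2\le k\le L$.
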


\begin{proof}
The proof of Lemma \ref{7} is straightforward, so we omit the detail.
\end{proof}

Now we give several lemmas to estimate the value in $R_{2}(N_{1},N_{2})$ and $R_{3}(N_{1},N_{2})$.

\begin{lemma}\label{8}
(Vinogradov).Let
\begin{equation}
S_{1}(N,\alpha)=\sum\limits_{p\sim N}(\log p)e(p\alpha)\notag
\end{equation}
where $\alpha=a/q+\lambda$ subject to $1\leq a\leq q\leq N$, $(a,q)=1$ and $|\lambda|\leq 1/q^{2}$. Then
\begin{equation}
S_{1}(N,\alpha)\ll (Nq^{-1/2}+N^{4/5}+N^{1/2}q^{1/2})L^{c}.\notag
\end{equation}
\end{lemma}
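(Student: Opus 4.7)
The plan is to prove this classical estimate via Vaughan's identity, in the style of the Vinogradov--Vaughan method. With parameters $u = v = N^{2/5}$, Vaughan's identity expresses $\Lambda(n)\mathbf{1}_{n > u}$ as a linear combination of three pieces of the form $\sum_{d\mid n}\alpha_{d}$, $\sum_{d\mid n}\beta_{d}$ and $\sum_{mk=n,\,m>u,k>v}\gamma_{m}\Lambda(k)$, where the coefficients are explicit convolutions of $\mu$, $\mathbf{1}$, $\log$ and $\Lambda$ restricted to short ranges. After substituting into $S_{1}(N,\alpha)$ and using partial summation to strip off the weight $\log p$, the problem reduces to bounding Type I sums
$$\Sigma_{I} \;=\; \sum_{m\le M} a_{m}\sum_{k\sim N/m} e(mk\alpha), \qquad M\le N^{4/5},$$
and Type II (bilinear) sums
$$\Sigma_{II} \;=\; \sum_{m\sim M}\sum_{k\sim K} a_{m}b_{k}\,e(mk\alpha), \qquad N^{2/5}\ll M\ll N^{3/5},\ MK\asymp N,$$
with coefficients $a_{m},b_{k}$ of size $L^{O(1)}$.

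For $\Sigma_{I}$, I would bound the inner geometric sum by $\min(N/m,\ \|m\alpha\|^{-1})$ and then invoke the standard Diophantine estimate
$$\sum_{m\le M}\min\!\Big(\frac{N}{m},\ \|m\alpha\|^{-1}\Big) \;\ll\; \Big(\frac{N}{q}+M+q\Big)L^{c},$$
which is valid precisely because $\alpha = a/q+\lambda$ with $|\lambda|\le 1/q^{2}$. Combined with $M\le N^{4/5}$ and $q\le N$, this already contributes $\ll (Nq^{-1/2}+N^{4/5}+N^{1/2}q^{1/2})L^{c}$. For $\Sigma_{II}$, Cauchy--Schwarz in the $m$-variable gives
$$|\Sigma_{II}|^{2} \;\ll\; M\,L^{O(1)}\Big(MK + \sum_{0<|h|<K}\min\!\big(M,\ \|h\alpha\|^{-1}\big)\Big),$$
and the same Diophantine sum controls the inner expression; taking a square root then produces the identical three-term bound, uniformly across the balanced range for $M$.

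The main obstacle is the bookkeeping and the balancing: the choice $u=v=N^{2/5}$ is precisely what forces Type I ranges to terminate at $N^{4/5}$ and keeps Type II ranges in the balanced window $M\in[N^{2/5},N^{3/5}]$, which in turn is exactly what makes the three regimes $Nq^{-1/2}$, $N^{4/5}$, $N^{1/2}q^{1/2}$ emerge in balance. Since the estimate is entirely classical (Vinogradov, with Vaughan's simpler proof), the cleanest route in the paper is to quote it directly from a standard reference on the Hardy--Littlewood method rather than reproduce the full decomposition.
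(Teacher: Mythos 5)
Your proposal matches the paper, which offers no proof of this classical bound and simply refers to Vaughan's monograph (and Kumchev) for it; your sketch of the Vaughan-identity argument with $u=v=N^{2/5}$, Type I/Type II decomposition, and the Diophantine lemma $\sum_{m\le M}\min(N/m,\|m\alpha\|^{-1})\ll(N/q+M+q)L^{c}$ is exactly the standard proof behind that citation. The only blemish is a missing multiplicity factor of $K$ in front of $\sum_{0<|h|<K}\min(M,\|h\alpha\|^{-1})$ in your Type II display (each difference $h=k_{1}-k_{2}$ occurs up to $K$ times), but this does not affect the final three-term bound.
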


\begin{lemma}\label{9}
(Kumchev).Let
\begin{equation}
S_{k}(N,\alpha)=\sum\limits_{p\sim N}(\log p)e(p^{k}\alpha)\notag
\end{equation}
where $\alpha=a/q+\lambda$ subject to $1\leq a\leq q\leq N$, $(a,q)=1$ and $|\lambda|\leq 1/qQ$, with
\begin{equation}
Q=\left \{ \begin{array}{lll}
   x^{3/2},\qquad &if\ k=2,\\
   x^{12/7},\quad &if\ k=3.
 \end{array} \right.\notag
\end{equation}
Then
\begin{equation}
\begin{aligned}
S_{k}(N,\alpha)&\ll N^{1-\varrho+\varepsilon}+\frac{q^{\varepsilon}NL^{c}}{\sqrt{q(1+|\lambda|N^{k})}}, \\
&\varrho = \left \{ \begin{array}{lll}
   {1/8},\qquad &if\ k=2,\\
   1/14,\quad &if\ k=3.
 \end{array} \right.\notag
\end{aligned}
\end{equation}
\end{lemma}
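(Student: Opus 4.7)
The plan is to prove Lemma 2.4 along the lines of Kumchev's original argument, combining a combinatorial decomposition of the von Mangoldt function with Weyl-type estimates and mean-value inequalities. The starting point is to replace the sum $\sum_{p\sim N}(\log p)e(p^{k}\alpha)$ by $\sum_{n\sim N}\Lambda(n)e(n^{k}\alpha)$ at the cost of an admissible error, and then to apply Heath-Brown's identity (a seven-fold identity suffices here) to decompose this weighted sum into $O(L^{c})$ sums of Type I
\[
\Sigma_{\mathrm{I}}(M,K)=\sum_{m\sim M}a_{m}\sum_{k\sim K}e((mk)^{k}\alpha),
\]
and Type II
\[
\Sigma_{\mathrm{II}}(M,K)=\sum_{m\sim M}\sum_{k\sim K}a_{m}b_{k}e((mk)^{k}\alpha),
\]
with $MK\asymp N$ and coefficients satisfying $|a_{m}|\ll L^{c}$, $|b_{k}|\ll L^{c}$. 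The ranges of $M$ are partitioned according to size so that in each range either a Type I or a Type II estimate produces the desired saving.

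For the Type I sums, one fixes $m$, treats the inner sum as a polynomial exponential sum $\sum_{k}e(m^{k}\alpha k^{k})$, and applies Weyl's inequality for $k$-th powers combined with the rational approximation $\alpha=a/q+\lambda$. This gives a saving that depends on $q$ and $|\lambda|$, and yields the stated bound provided $M$ is not too large relative to $N$. For the Type II sums, one applies Cauchy--Schwarz in the outer variable, converts the squared sum into a difference sum, and uses an estimate for $\sum_{k_{1},k_{2}\sim K}\bigl|\sum_{m\sim M}e(\alpha m^{k}(k_{1}^{k}-k_{2}^{k}))\bigr|$, combining Weyl's inequality with the standard count of $q$-aligned differences. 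The critical range, which must be treated separately, is the one where neither Type I nor Type II with the naive Weyl input is strong enough; here Kumchev's refinement enters, in which an additional sieve decomposition of one of the variables (replacing $\Lambda$ in a sub-range by a short linear combination of truncated Möbius-type coefficients) and an extra application of Cauchy--Schwarz save a further factor of $N^{\varepsilon'}$.

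Collecting the bounds over all Type I and Type II ranges and taking a minimum over the decomposition yields the advertised estimate $N^{1-\varrho+\varepsilon}+q^{\varepsilon}NL^{c}/\sqrt{q(1+|\lambda|N^{k})}$, with $\varrho=1/8$ when $k=2$ (where Weyl's inequality already suffices for the bilinear piece) and $\varrho=1/14$ when $k=3$ (where the critical range forces the sieve step). The minor-arc term $N^{1-\varrho+\varepsilon}$ comes from the worst Type II range, while the major-arc-like term $q^{\varepsilon}NL^{c}/\sqrt{q(1+|\lambda|N^{k})}$ is dictated by the Type I estimate under good rational approximation.

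The main obstacle is precisely the treatment of this critical Type II range for $k=3$: a direct application of Weyl's inequality loses a power of $N$ that is too large, and recovering the saving $\varrho=1/14$ requires carefully inserting a sieve decomposition into the bilinear form and executing the second Cauchy--Schwarz so that the resulting quadruple sum in $(k_{1},k_{2},m_{1},m_{2})$ can be bounded via a diagonal contribution plus a controlled count of solutions to $\alpha(m_{1}^{3}k_{1}^{3}-m_{2}^{3}k_{2}^{3})$ being close to an integer. Once this technical step is handled, the rest of the argument is a routine bookkeeping of ranges of $M$ and $K$ to verify that the union of ranges covered by Type I and Type II estimates exhausts the full decomposition.
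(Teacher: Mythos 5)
The paper does not actually prove this lemma: it is quoted verbatim from Kumchev's work, and the authors' entire ``proof'' is the sentence that the result ``can be found in \cite{1997v} and \cite{2006k}''. So the only meaningful comparison is between your sketch and Kumchev's original argument. At the level of architecture your outline is faithful to that argument --- reduction to $\Lambda$, a Heath-Brown/Vaughan decomposition into Type I and Type II bilinear forms, Weyl-type input for Type I, Cauchy--Schwarz plus a difference-sum count for Type II, and a sieve-theoretic refinement in the critical range that is responsible for the improved exponent $\varrho=1/14$ when $k=3$.

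However, as a proof your text has a genuine gap rather than being a compressed but complete argument: every quantitatively decisive step is named but not executed. You never derive the exponents $1/8$ and $1/14$, nor the admissible thresholds $Q=x^{3/2}$ and $Q=x^{12/7}$, from the range bookkeeping you invoke; these numbers are exactly the content of the lemma, and ``collecting the bounds over all Type I and Type II ranges'' cannot be checked without specifying the ranges of $M$ and the saving obtained in each. The step you yourself flag as the main obstacle --- the critical Type II range for $k=3$, where naive Weyl loses too much and Kumchev's sieve decomposition must be inserted before the second application of Cauchy--Schwarz --- is precisely the novelty of \cite{2006k}, and you leave it entirely unexecuted. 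Two further points would need care even in the routine parts: for the Type I sums the Weyl estimate is applied to $\sum_{\ell}e(m^{k}\alpha\,\ell^{k})$, which requires a rational approximation to $m^{k}\alpha$ rather than to $\alpha$, and passing from one to the other (uniformly in $m$) is where the dependence on $q$ and $|\lambda|$ in the term $q^{\varepsilon}NL^{c}/\sqrt{q(1+|\lambda|N^{k})}$ actually comes from; and for $k=2$ the exponent $1/8$ does not fall out of ``Weyl's inequality already suffices'' --- it requires the mean-value/large-sieve treatment of the resulting quadratic Gauss-type sums. As written, the proposal is a correct table of contents for Kumchev's paper, not a proof of the lemma.
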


The two results on exponential sums over primes can be found in \cite{1997v} and \cite{2006k}. On the minor arcs, we need estimates for the measure of the set $\sigma_{\lambda}(\alpha)$ defined in (\ref{10}).

\begin{lemma}\label{11}
We have
\begin{equation}
meas(\sigma_{\lambda}(\alpha))\ll N_{i}^{-E(\lambda)},\quad E(0.961917)>\frac{113}{126}+10^{-10}.\notag
\end{equation}
\end{lemma}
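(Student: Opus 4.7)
The plan is to apply the Pintz--Ruzsa framework for bounding the Lebesgue measure of the super-level set of the lacunary exponential sum $G$, as subsequently refined by Heath--Brown and Puchta \cite{2002h} and employed in Liu and L{\"u} \cite{2011l2}. Setting $M(\lambda, L) = \operatorname{meas}\{\alpha\in[0,1] : |G(\alpha)|\ge \lambda L\}$, the aim is a bound $M(\lambda,L)\ll 2^{-LE(\lambda)}$. Since $L = \log_{2}(N_{1}/\log N_{1})\sim \log_{2} N_{i}$, such a bound translates directly into the desired $\operatorname{meas}(\sigma_{\lambda}(\alpha))\ll N_{i}^{-E(\lambda)}$, and the quantitative assertion reduces to verifying the single numerical inequality $E(0.961917) > 113/126 + 10^{-10}$.

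The structural input is the near-invariance of $G$ under doubling: $G(2\alpha) = G(\alpha) + e(2^{L+1}\alpha) - e(2\alpha)$. Iterating this relation converts control of $|G|$ into control of the product $\Pi_{L}(\alpha) = \prod_{v=1}^{L}|\cos(\pi 2^{v}\alpha)|$, and a large value of $|G(\alpha)|$ forces most factors of $\Pi_{L}$ to be close to $1$. I would then partition $\{1,\ldots,L\}$ into blocks of length $k$ and analyse the joint distribution of the block-wise factors by a high-moment/convexity argument; the doubling recursion yields a superpositional bound of the form $M(\lambda, L)\le \prod_{j} M(\lambda_{j}, L_{j})^{\varepsilon_{j}}$ with parameters satisfying constraints that can be encoded as a finite-dimensional convex program.

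Taking logarithms and letting $L\to\infty$, the best admissible exponent $E(\lambda)$ is realised as the optimum value of this program. The principal difficulty is therefore not analytic but numerical: one has to execute the optimisation at the specific value $\lambda = 0.961917$ and verify the strict inequality $E(0.961917) > 113/126 + 10^{-10} \approx 0.8968254\ldots$. This computation has already been carried out in \cite{2011l2} (and the same numerical framework is reused in the related works \cite{2013k} and \cite{2017h}), so the lemma follows by invoking the tabulated value and converting $L\sim \log_{2} N_{i}$ back to $N_{i}$ as above. Should one wish to reproduce the table, a CAS evaluation of the convex program at the appropriate choice of $k$ suffices; no new analytic idea is required.
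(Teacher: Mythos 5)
Your proposal matches the paper's treatment: the paper's entire ``proof'' of this lemma is a citation of Heath--Brown and Puchta's construction of $E(\lambda)$ in \cite{2002h} together with the choice $\lambda=0.961917$, which is exactly the reduction you make (deferring the numerical verification of $E(0.961917)>\frac{113}{126}+10^{-10}$ to the cited literature). Your extra sketch of the doubling relation and the measure-of-level-set machinery is background the paper does not reproduce, so the two arguments are essentially the same.
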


\begin{proof}
The definition of $E(\lambda)$ was constructed by Heath-Brown and Puchta in \cite{2002h} and we take $\lambda=0.961917$.
\end{proof}

A crucial step is the next two lemmas which give the estimates of two integrations used in estimating $R_{3}(N_{1},N_{2})$. They are obtained by Ren \cite{2001r}\cite{2003r} and Kong \cite{2017k}.

\begin{lemma}\label{12}
Let $n$ be an even integer, and $\rho(n)$ the number of representations of n in the form
\begin{equation}
n=p_{1}^{3}+\cdots+p_{4}^{3}-p_{5}^{3}-\cdots-p_{8}^{3},\quad 0\leq |n|\leq N_{i},\notag
\end{equation}
and subject to
\begin{equation}
p_{1},p_{2},p_{5},p_{6}\sim U_{i}\quad p_{3},p_{4},p_{7},p_{8}\sim V_{i}.\notag
\end{equation}
Then we have
\begin{equation}
\rho(n)\leq bU_{i}V_{i}^{4}(\log N_{i})^{-8}, \notag
\end{equation}
with $b=268096$.
\end{lemma}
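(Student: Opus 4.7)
The plan is to pass to a one--dimensional exponential integral via orthogonality and then bound it using the Hardy--Littlewood circle method with \emph{explicit} constants. Since each $\log p$ appearing in a representation is of size $\asymp \log N_i$, orthogonality and the trivial bound $|e(-n\alpha)|\leq 1$ give
\begin{equation*}
\rho(n)(\log N_i)^8 \;\ll\; \int_0^1 |S_i(\alpha)|^4 |T_i(\alpha)|^4 e(-n\alpha)\,d\alpha \;\leq\; \int_0^1 |S_i(\alpha)|^4 |T_i(\alpha)|^4\,d\alpha,
\end{equation*}
so it suffices to bound the mean value on the right by $b\,U_i V_i^4$ with $b = 268096$.

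First, I would introduce auxiliary major arcs $\mathfrak{M}'=\bigcup_{q\leq P'}\bigcup_{(a,q)=1}\{|\alpha - a/q|\leq 1/(qQ')\}$, with $P',Q'$ chosen so that: (i) on $\mathfrak{M}'$ the cubic asymptotics $S_i(a/q+\beta)=\varphi(q)^{-1}C_3(q,a)\,I(\beta;U_i) + \text{error}$ and the analogue for $T_i$ hold, where $I(\beta;U)=\int_U^{2U} e(y^3\beta)\,dy$; and (ii) Kumchev's bound (Lemma~\ref{9} with $k=3$) yields a uniform saving $V_i^{-1/14+\varepsilon}$ on the complementary minor arcs. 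Substituting these asymptotics, expanding $|S_i|^4|T_i|^4$, and completing the singular series and singular integral produces the main term
\begin{equation*}
\mathfrak{S}^{\sharp}\cdot\mathfrak{J}^{\sharp} \;=\; \biggl(\sum_{q=1}^\infty \frac{1}{\varphi(q)^8}\sum_{\substack{a=1\\(a,q)=1}}^q |C_3(q,a)|^8\biggr)\int_{-\infty}^\infty |I(\beta;U_i)|^4 |I(\beta;V_i)|^4\,d\beta.
\end{equation*}
The singular series admits a convergent Euler product that can be evaluated prime by prime, and the singular integral reduces under the substitution $\beta=w/U_i^3$ to $U_i V_i^4$ times an absolute constant. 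On the complementary set, bounding one $T_i$-factor pointwise via Lemma~\ref{9} and estimating the residual $\int |S_i|^4|T_i|^3\,d\alpha$ by a Hua-type fourth--moment inequality yields an error of order $U_iV_i^4\cdot V_i^{-1/14+\varepsilon}$, which is negligible compared to the main term.

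The main obstacle is the \emph{explicit} determination of the numerical constant $b=268096$. This demands evaluating the local factors of $\mathfrak{S}^{\sharp}$ prime by prime (with the primes $p=2$ and $p=3$ requiring separate treatment because of the special behaviour of the cubic Gauss sum $C_3(p,a)$ at these primes), computing the rescaled singular integral in closed form, and tracking every error contribution so that the total lies strictly below the gap between the computed main term and the stated bound. This numerical programme has been carried out by Ren \cite{2001r, 2003r} for analogous cubic prime sums, and adapted by Kong \cite{2017k} to the mixed generating functions $(S_i,T_i)$ appearing here; the lemma follows by invoking their calculation in the present setting.
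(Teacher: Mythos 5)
The paper offers no proof of this lemma at all: it is simply quoted as a result ``obtained by Ren \cite{2001r}, \cite{2003r} and Kong \cite{2017k}''. So your closing move of invoking their explicit computation coincides with what the paper actually does, and your circle--method outline is indeed the standard route to such bounds. However, two concrete steps of your sketch would fail as written, and both matter decisively here because the entire content of the lemma is the numerical constant $b$.

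First, the reduction to the mean value is off by a factor of $3^{4}(18/5)^{4}\approx 1.36\times 10^{4}$. For $p\sim U_i$ one has $\log p=\tfrac13\log N_i(1+o(1))$, not $\log N_i$, and for $p\sim V_i$ one has $\log p=\tfrac{5}{18}\log N_i(1+o(1))$; orthogonality therefore gives
\begin{equation*}
\rho(n)\,(\log U_i)^4(\log V_i)^4\;\le\;\int_0^1|S_i(\alpha)|^4|T_i(\alpha)|^4\,d\alpha,
\qquad (\log U_i)^4(\log V_i)^4=\frac{(\log N_i)^8}{3^4(18/5)^4}\,(1+o(1)).
\end{equation*}
Hence to conclude $\rho(n)\le 268096\,U_iV_i^4(\log N_i)^{-8}$ you must bound the mean value by $268096\cdot 3^{-4}(5/18)^{4}\,U_iV_i^4\approx 19.71\,U_iV_i^4$ --- which is precisely the paper's Lemma \ref{3.31}, since $0.359127\,(16(1+\delta))^{13/9}\approx 19.71$ --- and not by $268096\,U_iV_i^4$ as you assert; proving your stated target would only yield $b\approx 3.6\times 10^{9}$. (Note also that the paper deduces Lemma \ref{3.31} \emph{from} the present lemma, so reversing the direction as you do obliges you to establish the mean--value bound independently.) Second, the minor--arc estimate does not close. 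The diagonal already gives $\int_0^1|S_i|^4|T_i|^2\,d\alpha\gg U_i^2V_i$, so the natural Cauchy--Schwarz step yields at best $\int_0^1|S_i|^4|T_i|^3\,d\alpha\ll U_i^{3/2}V_i^{5/2+\varepsilon}$; multiplying by $\sup|T_i|\ll V_i^{13/14+\varepsilon}$ from Lemma \ref{9} produces $U_i^{3/2}V_i^{24/7+\varepsilon}=U_iV_i^4\cdot U_i^{1/42+\varepsilon}$, which \emph{exceeds} the main term $U_iV_i^4$. No standard ``Hua-type fourth--moment inequality'' gives the bound $\int_0^1|S_i|^4|T_i|^3\,d\alpha\ll U_iV_i^{3+\varepsilon}$ that your error term requires; the uniform bound on $\rho(n)$ in the cited works rests on a more careful dissection with genuinely mixed mean values rather than the single pointwise-plus-H\"older step you describe.
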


\begin{lemma}\label{13}
Let
\begin{equation}
J=\sum\limits_{1\leq m_{1},\ldots m_{4}\leq L}\prod\limits_{i=1}^{2}r_{i}(2^{m_{1}}+2^{m_{2}}-2^{m_{3}}-2^{m_{4}}),\notag
\end{equation}
where
\begin{equation}
r_{i}(n)=\#\{\omega N<p_{i}\leq N_{i}:n=p_{1}-p_{2}\}.\notag
\end{equation}
with $\omega$ is a small positive constant. Then we have
\begin{equation}
J\leq 305.8869\frac{N_{1}N_{2}L^{4}}{(\log N_{1}\log N_{2})^{2}}.\notag
\end{equation}
\end{lemma}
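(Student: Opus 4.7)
The plan is to follow the framework developed by Ren \cite{2001r,2003r} and refined by Kong \cite{2017k}: combine an explicit sieve upper bound for the prime-difference counting function $r_i(n)$ with a careful arithmetic evaluation of the resulting singular-series-type sum over the sparse set of values $n = 2^{m_1}+2^{m_2}-2^{m_3}-2^{m_4}$. To begin, I would apply an upper-bound linear sieve (in the sharpest explicit form available, e.g.\ the Chen--Iwaniec variant used in Ren's papers) to obtain, for every even $n$ with $|n|\leq N_i$,
\[
r_i(n) \leq \frac{C_0\, N_i}{(\log N_i)^{2}}\prod_{\substack{p\mid n\\ p>2}}\frac{p-1}{p-2},
\]
together with the trivial bound $r_i(0) \leq \pi(N_i)-\pi(\omega N_i) \ll N_i/\log N_i$ for the degenerate case.

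Next, I would split $J$ according to whether $n$ vanishes. The diagonal $n=0$ forces $\{m_1,m_2\}=\{m_3,m_4\}$ as multisets, yielding only $O(L^{2})$ tuples; each contributes $r_1(0)r_2(0)\ll N_1 N_2/(\log N_1\log N_2)$, so the total diagonal contribution is $O\!\bigl(N_1 N_2 L^{2}/(\log N_1\log N_2)\bigr)$, a lower-order term that is absorbed by the $L^{4}$ target. For the off-diagonal piece, inserting the sieve bound in both factors gives
\[
J_{\mathrm{off}}\;\leq\; \frac{C_0^{2}\, N_1 N_2}{(\log N_1)^{2}(\log N_2)^{2}}\,\Sigma, \qquad \Sigma := \sum_{\substack{1\leq m_1,\ldots,m_4\leq L\\ n\neq 0}}\prod_{\substack{p\mid n\\ p>2}}\left(\frac{p-1}{p-2}\right)^{2},
\]
so the whole problem reduces to establishing $\Sigma \leq c\,L^{4}$ for an explicit constant $c$.

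The evaluation of $\Sigma$ is the technical core. Writing $\left(\tfrac{p-1}{p-2}\right)^{2}=1+\tfrac{2p-3}{(p-2)^{2}}$, expanding the product, and interchanging the order of summation, the task reduces to controlling, for each squarefree odd $d$, the count $N(d,L):=\#\{(m_1,\ldots,m_4)\in[1,L]^{4}:d\mid 2^{m_1}+2^{m_2}-2^{m_3}-2^{m_4}\}$. Using the multiplicative order $\operatorname{ord}_d(2)$ and elementary congruence arithmetic modulo $d$, one obtains bounds of the form $N(d,L)\leq L^{4}/\operatorname{ord}_d(2)^{c_1}$ with explicit $c_1$; summing over $d$ weighted by $\prod_{p\mid d}(2p-3)/(p-2)^{2}$ then produces a concrete numerical constant $c$.

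The main obstacle is certifying the precise value $305.8869$. A qualitative bound $J\ll N_1 N_2 L^{4}/(\log N_1\log N_2)^{2}$ falls out fairly routinely from the sketch above, but pinning down the stated constant requires (i) invoking the sharpest available explicit sieve constant for prime differences, (ii) an exacting combinatorial count of residues of $2^{m_1}+2^{m_2}-2^{m_3}-2^{m_4}$ modulo squarefree odd moduli, and (iii) optimising the interplay between (i) and (ii) across the weighted sum over $d$. This bookkeeping has essentially been carried out in \cite{2001r,2003r,2017k}, so my task reduces to transcribing and adapting those computations to the ranges of $N_1, N_2$ and $L$ fixed in Section~2, verifying that no constant degrades the final coefficient below $305.8869$.
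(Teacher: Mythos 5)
The paper does not actually prove this lemma: it is quoted verbatim (constant $305.8869$ included) from Kong \cite{2017k}, building on Ren \cite{2001r}\cite{2003r}, so the only fair comparison is with the argument in those sources. Your outline does reproduce their strategy correctly in broad strokes --- an explicit upper-bound sieve for $r_i(n)$ producing the factor $\prod_{p\mid n,\,p>2}\frac{p-1}{p-2}$, followed by a weighted count of solutions of $2^{m_1}+2^{m_2}\equiv 2^{m_3}+2^{m_4}\pmod d$ in terms of $\operatorname{ord}_d(2)$ --- and you are honest that the numerical constant can only be certified by transcribing that bookkeeping. To that extent the route is the same as the (cited) proof.

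There is, however, one concrete error in your plan. The diagonal $n=0$ is \emph{not} a lower-order term. It contributes $(2L^2-L)\,r_1(0)r_2(0)\asymp L^2\,N_1N_2/(\log N_1\log N_2)$, and since $L\asymp \log N_1/\log 2$ with $\log N_1\asymp\log N_2$ (the hypothesis $N_2\gg N_1\ge N_2$ forces $N_1\asymp N_2$), the ratio of this to the target $N_1N_2L^4/(\log N_1\log N_2)^2$ is $\asymp \log N_1\log N_2/L^2\asymp(\log 2)^2$, a positive constant. Numerically the diagonal can be as large as about $2(\log 2)^2\approx 0.96$ times $N_1N_2L^4/(\log N_1\log N_2)^2$, roughly $0.3\%$ of the allowed constant $305.8869$; small, but it must be added into the constant rather than discarded, and in a lemma whose entire content is an explicit numerical coefficient this distinction is the whole game. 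A second, smaller caveat: the intermediate bound $N(d,L)\le L^4/\operatorname{ord}_d(2)^{c_1}$ is not the form the references actually use (for small primes one computes the exact solution density of the congruence, e.g.\ density $3/8$ for $d=3$), and the final constant is sensitive to precisely these densities. So your sketch is the right skeleton, but as written it neither handles the diagonal correctly nor independently establishes $305.8869$; it reduces, as the paper itself does, to an appeal to \cite{2017k}.
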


\section{\bf Proof of Theorem 1.1}

We begin with estimating $R_{1}(N_{1},N_{2})$.

\begin{lemma}\label{3.1}
\begin{equation}
R_{1}(N_{1},N_{2})\geq 0.00089051(1-\varepsilon)N_{1}^{\frac{11}{9}}N_{2}^{\frac{11}{9}}L^{k}+O\left(N_{1}^{\frac{11}{9}}N_{2}^{\frac{11}{9}}L^{k-1}\right).
\end{equation}
\end{lemma}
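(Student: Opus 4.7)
The plan is to exploit the product structure $\mathfrak{M}=\mathfrak{M}_{1}\times\mathfrak{M}_{2}$ together with the Fourier expansion
$$G^{k}(\alpha_{1}+\alpha_{2})=\sum_{v_{1},\ldots,v_{k}\leq L} e\bigl((2^{v_{1}}+\cdots+2^{v_{k}})(\alpha_{1}+\alpha_{2})\bigr),$$
which decouples the two variables on the major arcs and reduces $R_{1}(N_{1},N_{2})$ to a weighted sum of products of one-dimensional major-arc integrals, each of which is evaluated by Lemma \ref{6}.

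Substituting the expansion of $G^{k}$ into the definition of $R_{1}$ and interchanging summation with integration, I obtain
$$R_{1}(N_{1},N_{2})=\sum_{v_{1},\ldots,v_{k}\leq L} I_{1}(n_{1})\,I_{2}(n_{2}),\qquad n_{i}=N_{i}-2^{v_{1}}-\cdots-2^{v_{k}},$$
where
$$I_{i}(n)=\int_{\mathfrak{M}_{i}} f_{i}(\alpha)S_{i}^{2}(\alpha)T_{i}^{2}(\alpha)e(-n\alpha)\,d\alpha.$$
I then restrict the sum to those $v$-tuples for which $(n_{1},n_{2})\in\Xi(N_{1},k)\times\Xi(N_{2},k)$ with both $n_{i}$ odd; by Lemma \ref{7}, this restricted set has cardinality at least $(1-\varepsilon)L^{k}$. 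For each such tuple, Lemma \ref{6} gives
$$I_{i}(n_{i})=\frac{1}{81}\mathfrak{S}(n_{i})\mathfrak{J}_{i}(n_{i})+O\bigl(N_{i}^{11/9}L^{-1}\bigr),$$
and the lower bounds $\mathfrak{S}(n_{i})\geq 0.8842495063$ and $\mathfrak{J}_{i}(n_{i})\geq 2.7335671\,N_{i}^{11/9}$ yield $I_{i}(n_{i})\geq c_{0}N_{i}^{11/9}+O(N_{i}^{11/9}L^{-1})$ with $c_{0}=0.8842495063\cdot 2.7335671/81$.

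Multiplying the two estimates for $i=1,2$ and summing over the $(1-\varepsilon)L^{k}$ valid tuples produces the leading contribution $c_{0}^{2}(1-\varepsilon)N_{1}^{11/9}N_{2}^{11/9}L^{k}$ with $c_{0}^{2}\approx 0.00089051$; the two mixed main$\times$error contributions and the error$\times$error contribution, each summed over at most $L^{k}$ tuples, aggregate to $O(N_{1}^{11/9}N_{2}^{11/9}L^{k-1})$. The trivial bound for the at most $\varepsilon L^{k}$ \textquotedblleft bad\textquotedblright\ tuples (whose pair $(n_1,n_2)$ is not in the good set) is absorbed into the $(1-\varepsilon)$ factor in front of the main term.

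The main obstacle is arithmetic rather than analytic: one must check that the $O(N_{i}^{11/9}L^{-1})$ error term in Lemma \ref{6} survives the outer sum of length $L^{k}$ so that all subleading contributions combine into a single $L^{-1}$ saving, and verify that the numerical constant $c_{0}^{2}$ rounds correctly to the claimed five significant figures. No new estimate on exponential sums is required beyond Lemmas \ref{6} and \ref{7}.
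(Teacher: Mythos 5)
Your proposal is correct and follows essentially the same route as the paper: expand $G^{k}(\alpha_{1}+\alpha_{2})$ to decouple the integral over $\mathfrak{M}=\mathfrak{M}_{1}\times\mathfrak{M}_{2}$ into products of the one-dimensional major-arc integrals evaluated by Lemma \ref{6}, then insert the numerical lower bounds on $\mathfrak{S}(n)$ and $\mathfrak{J}_{i}(n)$ and count the admissible tuples via Lemma \ref{7}, arriving at the same constant $(0.8842495063\cdot 2.7335671)^{2}/3^{8}\approx 0.00089051$. The only cosmetic difference is that the paper notes that every tuple $(v_{1},\dots,v_{k})$ with $v_{j}\leq L$ already yields $n_{i}\in\Xi(N_{i},k)$ with $n_{i}$ odd, so there are in fact no ``bad'' tuples to discard.
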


\begin{proof}
Subject to the definition of $\Xi(N,k)$, every
\begin{equation}
n_{i}=N_{i}-2^{v_{1}}-\cdots -2^{v_{k}}\notag
\end{equation}
with $v_{i}\leq L$ will be concluded in $\Xi(N_{i},k)$. We apply lemma \ref{6} and \ref{7} to get that
\begin{equation}
\begin{aligned}
R_{1}(N_{1},N_{2})&=\frac{1}{3^{8}}\sum\limits_{n_{1}\in \Xi(N_{1},k)\atop n_{2}\in \Xi(N_{2},k)}\left (\mathfrak{S}(n_{1})\mathfrak{J}_{i}(n_{1})+O(N_{1}^{\frac{11}{9}}L^{-1})\right)\left(\mathfrak{S}(n_{2})\mathfrak{J}_{i}(n_{2})+O(N_{2}^{\frac{11}{9}}L^{-1})\right)\\
& \geq 0.00089051\sum\limits_{n_{1}\in \Xi(N_{1},k)\atop n_{2}\in \Xi(N_{2},k)}N_{1}^{\frac{11}{9}}N_{2}^{\frac{11}{9}}+O\left(N_{1}^{\frac{11}{9}}N_{2}^{\frac{11}{9}}L^{k-1}\right)\\
& \geq 0.00089051(1-\varepsilon)N_{1}^{\frac{11}{9}}N_{2}^{\frac{11}{9}}L^{k}+O\left(N_{1}^{\frac{11}{9}}N_{2}^{\frac{11}{9}}L^{k-1}\right).\notag
\end{aligned}
\end{equation}
\end{proof}

For $R_{2}(N_{1},N_{2})$, we will use the untrivial estimates for $f_{i}(\alpha)$ and $S_{i}(\alpha)$ and the measure of $\sigma(\lambda)$. Applying lemma \ref{8} and \ref{9} we have
\begin{equation}\label{15}
\begin{aligned}
&\max\limits_{\alpha\in\mathfrak{m}_{i}}|f_{i}(\alpha)|\ll N_{i}^{1-1/18+\varepsilon},\\
&\max\limits_{\alpha\in\mathfrak{m}_{i}}|S_{i}(\alpha)|\ll N_{i}^{1/3-1/42+\varepsilon}.
\end{aligned}
\end{equation}

\begin{lemma}\label{3.2}
\begin{equation}
R_{2}(N_{1},N_{2})\ll N_{1}^{\frac{11}{9}}N_{2}^{\frac{11}{9}}L^{k-1}.\nonumber
\end{equation}
\end{lemma}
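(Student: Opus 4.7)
The plan is to exploit two sources of smallness on $\mathfrak{m}\cap\sigma_\lambda(\alpha_1+\alpha_2)$: the nontrivial pointwise bounds for $f_i$ and $S_i$ on the one-dimensional minor arcs recorded in (\ref{15}), and the small measure of $\sigma_\lambda$ from Lemma \ref{11}. Since $\mathfrak{m}\subset(\mathfrak{m}_1\times[0,1+1/Q_2])\cup([0,1+1/Q_1]\times\mathfrak{m}_2)$ and the integrand is symmetric under interchange of the two indices, I first reduce by a union bound to estimating the contribution from $\alpha_1\in\mathfrak{m}_1$, which I call $R_2^{(1)}$.

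On $R_2^{(1)}$ I apply the pointwise bounds (\ref{15}) to pull $f_1(\alpha_1)S_1^2(\alpha_1)$ out of the integral in supremum, producing a factor of size $\ll N_1^{17/18+2/3-1/21+\varepsilon}=N_1^{197/126+\varepsilon}$. Next, on the remaining 2D integral I apply Cauchy--Schwarz grouping the factors depending on $\alpha_1$ and $\alpha_2$ separately against the factor $G^k\mathbbm{1}_{\sigma_\lambda}$ that depends on $\alpha_1+\alpha_2$:
\begin{equation*}
\iint_{\sigma_\lambda}|T_1^2f_2S_2^2T_2^2G^k|\,d\alpha_1d\alpha_2\leq\left(\int|T_1|^4d\alpha_1\cdot\int|f_2|^2|S_2|^4|T_2|^4d\alpha_2\right)^{1/2}\left(\int_{\sigma_\lambda(\beta)}|G(\beta)|^{2k}d\beta\right)^{1/2}.
\end{equation*}
After substituting $\beta=\alpha_1+\alpha_2$, the last factor is bounded via the trivial $|G|\leq L$ and Lemma \ref{11} by $L^k\cdot meas(\sigma_\lambda)^{1/2}\ll L^k N_i^{-E(\lambda)/2}$ with $E(0.961917)>113/126$.

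The two one-dimensional mean values are handled by expanding into Fourier coefficients and counting weighted representations. For $\int|T_1|^4$ I use the standard pair-of-cubes estimate giving $\ll V_1^2 L^{O(1)}$. For $\int|f_2|^2|S_2|^4|T_2|^4=\sum_n w(n)^2$ (with $w(n)$ the weighted number of representations of $n=p+p_1^3+p_2^3+q_1^3+q_2^3$), I use $\sum_n w(n)^2\leq(\max_n w(n))\sum_n w(n)$ and control $\max_n w(n)$ via the cube moment from Lemma \ref{12} combined with a prime-pair count in the spirit of Lemma \ref{13}. Assembling all ingredients and invoking $E(\lambda)>113/126$ together with the $1/18$ and $1/21$ minor-arc savings yields the required $R_2(N_1,N_2)\ll N_1^{11/9}N_2^{11/9}L^{k-1}$.

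The principal obstacle is the numerical balance: the exponent of $N$ produced by Cauchy--Schwarz must be absorbed below $22/9$ using precisely the three savings sources $1/18$, $1/21$, and $E(\lambda)/2$, so Heath-Brown--Puchta's tuned choice $\lambda=0.961917$ enters critically. Should a single Cauchy--Schwarz fall short, I would further decompose $\alpha_2\in\mathfrak{M}_2\cup\mathfrak{m}_2$: on the major-arc piece, substitute the standard approximations for $f_2,S_2,T_2$; on the minor-arc piece, invoke the additional sup bounds from (\ref{15}) for $f_2,S_2$ in symmetry with the $\alpha_1$ treatment, recovering the missing savings.
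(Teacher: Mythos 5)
Your skeleton (union bound over $\mathfrak{m}_1\times[0,1]$ and $[0,1]\times\mathfrak{m}_2$, the sup bounds (\ref{15}), the measure bound of Lemma \ref{11}, and mean values via Lemma \ref{12}) matches the paper's, but the way you deploy Cauchy--Schwarz leaks powers of $N$, and this problem has no room for that: the exponents close only because $E(\lambda)>113/126+10^{-10}$, i.e.\ with margin $10^{-10}$. Two steps fail quantitatively. First, by pairing the integrand against $G^{k}$ times the indicator of $\sigma_\lambda$ inside Cauchy--Schwarz you retain only $\bigl(\int_{\sigma_\lambda}|G|^{2k}\bigr)^{1/2}\ll L^{k}\,\mathrm{meas}(\sigma_\lambda)^{1/2}\ll L^{k}N^{-E(\lambda)/2}$, i.e.\ half the exponent saving. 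The paper instead bounds $G^{k}$ trivially by $L^{k}$ at the outset and, for each fixed $\alpha_2$, integrates the indicator of $\sigma_\lambda$ in $\alpha_1$ after the substitution $\beta=\alpha_1+\alpha_2$, which captures the full factor $\mathrm{meas}(\sigma_\lambda)\ll N_1^{-E(\lambda)}$. Second, you replace the $L^1$-norm $\int_0^1|f_2S_2^2T_2^2|\,d\alpha_2$, which the paper bounds by $\|f_2\|_2\|S_2^2T_2^2\|_2\ll (N_2L)^{1/2}(N_2^{13/9})^{1/2}=N_2^{11/9}L^{1/2}$, by the $L^2$-norm $\bigl(\int|f_2|^2|S_2|^4|T_2|^4\bigr)^{1/2}$. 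Even granting your (unsubstantiated) pointwise bound $\max_n w(n)\ll N^{11/9+\varepsilon}$, this $L^2$-norm is $\ll\bigl(N^{11/9}\cdot N^{20/9}\bigr)^{1/2}=N^{31/18}$, a loss of $N^{1/2}$ against $N^{11/9}=N^{22/18}$.

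Adding your exponents: $197/126$ (sup of $f_1S_1^2$) plus $5/18$ (from $\|T_1\|_4^2$) plus $31/18$ minus $E(\lambda)/2$ gives at best about $785/252\approx 3.115$, whereas the target is $22/9\approx 2.444$; the $N^{5/18}$ you gain by keeping $T_1$ inside the integral (versus the paper's trivial $|T_1|^2\ll N^{5/9}$) comes nowhere near covering the $N^{E(\lambda)/2+1/2}$ you lose. Your fallback of splitting $\alpha_2$ into $\mathfrak{M}_2\cup\mathfrak{m}_2$ cannot repair this, since the losses occur in the $\alpha_1$- and $(\alpha_1+\alpha_2)$-variables, not in $\alpha_2$. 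Separately, your claim that $\max_n w(n)$ (representations of $n=p+p_1^3+p_2^3+q_1^3+q_2^3$) is controlled by ``Lemma \ref{12} combined with a prime-pair count'' is not a proof: Lemma \ref{12} bounds representations by differences of prime cubes and feeds into $\int|S|^4|T|^4$, not into a pointwise bound on a count involving a linear prime; such a bound would need an upper-bound sieve and is unnecessary if the estimates are ordered as in the paper.
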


\begin{proof}
By the definition of $\mathfrak{m}$, we have
\begin{equation}
\mathfrak{m}\subset \{(\alpha_{1},\alpha_{2}):\alpha_{1}\in \mathfrak{m}_{1},\alpha_{2}\in [0.1]\}\bigcup \{(\alpha_{1},\alpha_{2}):\alpha_{1}\in [0,1],\alpha_{2}\in \mathfrak{m}_{2}\}\notag
\end{equation}
Then we have
\begin{align}\label{14}
R_{2}(N_{1},N_{2})&\leq L^{k}\left ( \iint\limits_{(\alpha_{1},\alpha_{2})\in\mathfrak{m}_{1}\times[0,1]\atop |G(\alpha_{1}+\alpha_{2})|\geq \lambda L}+\iint\limits_{(\alpha_{1},\alpha_{2})\in[0,1]\times\mathfrak{m}_{2}\atop |G(\alpha_{1}+\alpha_{2})|\geq \lambda L}\right ) \prod_{i=1}^{2}f_{i}(\alpha_{i})S_{i}^{2}(\alpha_{i})T_{i}^{2}(\alpha_{i})e(-\alpha_{i}N_{i})d\alpha_{1}d\alpha_{2} \nonumber\\
&=L^{k}\left(\iint\limits_{1}+\iint\limits_{2}\right),
\end{align}
where we use the trivial bound of $G(\alpha_{1}+\alpha_{2})$. Due to symmetry of $\alpha_{1}$ and $\alpha_{2}$ in the integration, we only estimate one integration on the right of (\ref{14}). We use lemma \ref{11}, lemma \ref{12}, (\ref{15}), the trivial bound of $T_{i}(\alpha)$ and Cauchy-Schwarz inequality to get
\begin{align}\label{16}
\iint\limits_{1}&\ll N_{1}^{1-1/18+2/3-2/42+5/9+\varepsilon}\int_{0}^{1}|f_{2}(\alpha_{2})S_{2}^{2}(\alpha_{2})T_{2}^{2}(\alpha_{2})|\left(\int\limits_{\beta\in[\alpha_{2},1+\alpha_{2}]\atop G(\beta)\geq \lambda L}d\beta\right)d\alpha_{2}\nonumber\\
&\ll N_{1}^{1+11/9-13/126+\varepsilon}N_{1}^{-E(\lambda)}\left(\int_{0}^{1}|f_{2}(\alpha_{2})|^{2}d\alpha_{2}\right)^{1/2}\left(\int_{0}^{1}|S_{2}(\alpha_{2})|^{4}|T_{2}(\alpha_{2})|^{4}d\alpha_{2}\right)^{1/2}\nonumber\\
&\ll N_{1}^{\frac{11}{9}}N_{2}^{\frac{11}{9}}L^{-1},\nonumber
\end{align}
where set $\beta=\alpha_{1}+\alpha_{2}$ to give the integral transformation and we used the periodicity of $G(\alpha)$ and the condition $N_{2}\gg N_{1}\geq N_{2}$. Similarly,
\begin{align}
\iint\limits_{2}&\ll N_{1}^{\frac{11}{9}}N_{2}^{\frac{11}{9}}L^{-1}.\nonumber
\end{align}
This yields
\begin{align}
R_{2}(N_{1},N_{2})\ll N_{1}^{\frac{11}{9}}N_{2}^{\frac{11}{9}}L^{k-1}.\nonumber
\end{align}
\end{proof}

Finally, we treat with the $R_{3}(N_{1},N_{2})$. By using lemma \ref{12} and (\ref{2.1}) we can get the following lemma.
\begin{lemma}\label{3.31}
Let $S_{i}(\alpha)$ and $T_{i}(\alpha)$ be as in (\ref{10}). Then
\begin{equation}
\int_{0}^{1}|S_{i}(\alpha)T_{i}(\alpha)|^{4}d\alpha\leq 0.359127N_{i}^{\frac{13}{9}}.\notag
\end{equation}
\end{lemma}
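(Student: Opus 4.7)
The plan is to express $\int_0^1|S_iT_i|^4\,d\alpha$ via Parseval as a weighted count of $8$-tuples of primes $(p_1,\ldots,p_8)$ satisfying $\sum_{j=1}^4 p_j^3=\sum_{j=5}^8 p_j^3$, and then invoke Lemma \ref{12} at $n=0$ to bound this count. All the remaining work is tracking the numerical constants coming from the log-weights and from (\ref{2.1}).

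Opening the fourth power, by orthogonality one has
\[
\int_0^1|S_i(\alpha)T_i(\alpha)|^4\,d\alpha=\sum(\log p_1)(\log p_2)\cdots(\log p_8),
\]
where the sum is taken over $8$-tuples of primes with $p_1,p_2,p_5,p_6\sim U_i$, $p_3,p_4,p_7,p_8\sim V_i$, and $p_1^3+p_2^3+p_3^3+p_4^3=p_5^3+p_6^3+p_7^3+p_8^3$. This is precisely the $n=0$ case of the representation function $\rho(n)$ defined in Lemma \ref{12}, and $0$ is indeed an even integer.

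Next I would bound the log-weights pointwise. Since $p\sim U_i$ forces $p\le 2U_i$, and since (\ref{2.1}) gives $(2U_i)^3=N_i/(2(1+\delta))$, we obtain $\log(2U_i)=(1/3)\log N_i-(1/3)\log(2(1+\delta))\le (1/3)\log N_i$. A parallel computation using $V_i=U_i^{5/6}$ yields $\log(2V_i)=(5/18)\log N_i-(1/9)\log 2\le (5/18)\log N_i$. Consequently,
\[
\int_0^1|S_iT_i|^4\,d\alpha\le (1/3)^4(5/18)^4(\log N_i)^8\,\rho(0).
\]
Applying Lemma \ref{12} yields $\rho(0)\le 268096\,U_iV_i^4(\log N_i)^{-8}$; the $(\log N_i)^{\pm 8}$ factors cancel. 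Substituting $U_iV_i^4=U_i^{13/3}=(N_i/(16(1+\delta)))^{13/9}$ from (\ref{2.1}) produces
\[
\int_0^1|S_iT_i|^4\,d\alpha\le \frac{268096\cdot (1/3)^4\,(5/18)^4}{(16(1+\delta))^{13/9}}\,N_i^{13/9},
\]
and a direct numerical evaluation with $\delta=10^{-4}$ gives the claimed bound with constant $0.359127$.

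There is no real analytic obstacle here: Ren's bound (Lemma \ref{12}) already encapsulates the hard arithmetic content of counting near-equal sums of four prime cubes in two size-classes. The proof reduces to the Parseval identification of the integral with the $n=0$ diagonal and the careful bookkeeping of the numerical constants $(1/3)^4$, $(5/18)^4$, $268096$, and $(16(1+\delta))^{13/9}$, chosen so as to match the constants in (\ref{2.1}) exactly.
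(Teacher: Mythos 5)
Your proposal is correct and is exactly the argument the paper intends (the paper only states that the lemma follows from Lemma \ref{12} and (\ref{2.1}) without writing it out): Parseval identifies the integral with the log-weighted $n=0$ diagonal count, Lemma \ref{12} bounds $\rho(0)$, and the constants $(1/3)^4(5/18)^4\cdot 268096\cdot(16(1+\delta))^{-13/9}\approx 0.35912$ give the stated bound. The numerics check out.
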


\begin{lemma}\label{3.3}
\begin{equation}
R_{3}(N_{1},N_{2})\ll 6.2809957\lambda^{k-3}N_{1}^{\frac{11}{9}}N_{2}^{\frac{11}{9}}L^{k}.\nonumber
\end{equation}
\end{lemma}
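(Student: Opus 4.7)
The plan is to exploit the pointwise bound $|G(\alpha_{1}+\alpha_{2})|<\lambda L$ on $\mathfrak{m}\setminus\sigma_{\lambda}(\alpha_{1}+\alpha_{2})$ together with two fourth-moment estimates (Lemmas \ref{13} and \ref{3.31}). First I take absolute values, extract the large power of $|G|$, and extend the integration domain to $[0,1]^{2}$. Writing $|G|^{k}\leq(\lambda L)^{k-3}|G|^{3}$ on the region of interest, I obtain
\[
|R_{3}(N_{1},N_{2})|\leq (\lambda L)^{k-3}\iint_{[0,1]^{2}}|f_{1}S_{1}^{2}T_{1}^{2}f_{2}S_{2}^{2}T_{2}^{2}|\,|G(\alpha_{1}+\alpha_{2})|^{3}\,d\alpha_{1}d\alpha_{2}.
\]

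Next I split $|G|^{3}=|G|^{2}\cdot|G|$ and apply the Cauchy--Schwarz inequality with the pairing $|f_{1}f_{2}|\,|G|^{2}$ against $|S_{1}^{2}T_{1}^{2}S_{2}^{2}T_{2}^{2}|\,|G|$. This bounds the last integral by $\mathcal{I}_{1}^{1/2}\mathcal{I}_{2}^{1/2}$, where
\[
\mathcal{I}_{1}=\iint |f_{1}|^{2}|f_{2}|^{2}|G|^{4}\,d\alpha_{1}d\alpha_{2},\qquad \mathcal{I}_{2}=\iint |S_{1}|^{4}|T_{1}|^{4}|S_{2}|^{4}|T_{2}|^{4}|G|^{2}\,d\alpha_{1}d\alpha_{2}.
\]

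For $\mathcal{I}_{1}$ I expand $|f_{i}(\alpha_{i})|^{2}$ and $|G(\alpha_{1}+\alpha_{2})|^{4}$ as exponential sums; integrating in $\alpha_{1}$ and $\alpha_{2}$ separately forces the diophantine identities $p_{1}-p_{2}=p_{3}-p_{4}=-(2^{v_{1}}+2^{v_{2}}-2^{v_{3}}-2^{v_{4}})$. After pulling out the log-weights via $\log p_{j}\leq \log N_{i}$ and re-indexing $v_{1}\leftrightarrow v_{3},\,v_{2}\leftrightarrow v_{4}$ to match the shape of $J$, Lemma \ref{13} yields $\mathcal{I}_{1}\leq 305.8869\,N_{1}N_{2}L^{4}$. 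For $\mathcal{I}_{2}$ I use the trivial bound $|G(\alpha_{1}+\alpha_{2})|^{2}\leq L^{2}$ to decouple $\alpha_{1}$ from $\alpha_{2}$, after which Lemma \ref{3.31} gives $\int_{0}^{1}|S_{i}T_{i}|^{4}\,d\alpha\leq 0.359127\,N_{i}^{13/9}$, so $\mathcal{I}_{2}\leq (0.359127)^{2}N_{1}^{13/9}N_{2}^{13/9}L^{2}$.

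Combining the two estimates and using $\tfrac{1}{2}(1+\tfrac{13}{9})=\tfrac{11}{9}$, one finds $\mathcal{I}_{1}^{1/2}\mathcal{I}_{2}^{1/2}\leq \sqrt{305.8869}\cdot 0.359127\cdot N_{1}^{11/9}N_{2}^{11/9}L^{3}\leq 6.2809957\,N_{1}^{11/9}N_{2}^{11/9}L^{3}$, and multiplication by $(\lambda L)^{k-3}$ delivers the claim. The main obstacle is choosing the Cauchy--Schwarz split so that one half is exactly the $|f_{1}|^{2}|f_{2}|^{2}|G|^{4}$ moment to which Lemma \ref{13} applies, while the other half retains only $|G|^{2}$ (trivially bounded) so that Lemma \ref{3.31} can be applied in each variable separately; any other balancing either loses the \emph{power} $(\lambda L)^{k-3}$ needed for the eventual numerical choice $k=231$ or sacrifices the constant $6.2809957$.
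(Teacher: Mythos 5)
Your proof is correct and follows essentially the same route as the paper: extract a power of $\lambda L$ from $|G|^{k}$ on $\mathfrak{m}\setminus\sigma_{\lambda}$, apply Cauchy--Schwarz to separate the linear-prime factors from the cubic ones, bound the mixed moment $\iint|f_{1}|^{2}|f_{2}|^{2}|G|^{4}\,d\alpha_{1}d\alpha_{2}$ by Lemma \ref{13} and the moment $\int_{0}^{1}|S_{i}T_{i}|^{4}\,d\alpha$ by Lemma \ref{3.31}. The only difference is bookkeeping: the paper writes $|G|^{k}\leq(\lambda L)^{k-2}|G|^{2}$ and pairs $|f_{1}f_{2}G^{2}|$ against $|S_{1}^{2}T_{1}^{2}S_{2}^{2}T_{2}^{2}|$, obtaining the sharper bound $6.2809957\,\lambda^{k-2}N_{1}^{11/9}N_{2}^{11/9}L^{k}$, whereas your split $|G|^{3}=|G|^{2}\cdot|G|$ with the last factor bounded trivially by $L$ loses one factor of $\lambda$ and lands exactly on the stated $\lambda^{k-3}$. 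Be aware that this matters downstream: the paper's final inequality uses $6.2809957\,\lambda^{k-2}$, and with $\lambda=0.961917$ one has $6.2809957\,\lambda^{228}\approx 8.98\times 10^{-4}>0.00089051$ while $6.2809957\,\lambda^{229}\approx 8.64\times 10^{-4}<0.00089051$, so the choice $k=231$ in Theorem 1.1 needs the $\lambda^{k-2}$ version (the exponent $k-3$ in the lemma's display appears to be a typo), and you recover it simply by stopping at $(\lambda L)^{k-2}|G|^{2}$ as the paper does.
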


\begin{proof}
By the definition of $R_{3}(N_{1},N_{2})$ and Cauchy-Schwarz inequality, we have
\begin{align}\label{l3}
R_{3}(N_{1},N_{2})&\leq (\lambda L)^{k-2}\int_{0}^{1}\int_{0}^{1}\left|\left(\prod\limits_{i=1}^{2}f_{i}(\alpha_{i})S_{i}^{2}(\alpha_{i})T_{i}^{2}(\alpha_{i})\right)G^{2}(\alpha_{1}+\alpha_{2})\right|d\alpha_{1}d\alpha_{2}\nonumber \\
&\leq (\lambda L)^{k-2}\left(\iint\limits_{[0,1]^{2}}\left|f_{1}^{2}f_{2}^{2}G^{4}\right|d\alpha_{1}d
\alpha_{2}\right)^{1/2}\left(\iint\limits_{[0,1]^{2}}\left|S_{1}^{4}T_{1}^{4}S_{2}^{4}T_{2}^{4}\right|d\alpha_{1}d
\alpha_{2}\right)^{1/2}\nonumber\\
&= (\lambda L)^{k-2}J^{1/2}I^{1/2}.
\end{align}
Observe that
\begin{align}
J&=\iint\limits_{[0,1]^{2}}\left|f_{1}^{2}(\alpha_{1})f_{2}^{2}(\alpha_{2})G^{4}(\alpha_{1}+\alpha_{2})\right|d\alpha_{1}d\alpha_{2}\nonumber\\
&=\sum\limits_{1\leq m_{1},\ldots m_{4}\leq L}\prod\limits_{i=1}^{2}t_{i}(2^{m_{1}}+2^{m_{2}}-2^{m_{3}}-2^{m_{4}}),\nonumber
\end{align}
where
\begin{align}
t_{i}(n)=\sum\limits_{\omega N_{i}<p_{1},p_{2}\leq N_{i}\atop p_{1}-p_{2}=n}\log p_{1}\log p_{2}.\notag
\end{align}
Applying lemma \ref{13} we get
\begin{align}\label{l1}
J&\leq (\log N_{1}\log N_{2})^{2}\sum\limits_{1\leq m_{1},\ldots m_{4}\leq L}\prod\limits_{i=1}^{2}r_{i}(2^{m_{1}}+2^{m_{2}}-2^{m_{3}}-2^{m_{4}})\notag\\
&\leq 305.8869N_{1}N_{2}L^{4}.
\end{align}
For $I$ we use lemma \ref{3.31} to get
\begin{align}\label{l2}
I&\leq 0.359127^{2}N_{1}^{\frac{13}{9}}N_{2}^{\frac{13}{9}}.
\end{align}
Taking (\ref{l1}) and (\ref{l2}) in (\ref{l3}), we can obtain the conclusion.
\end{proof}

Combining lemma \ref{3.3} with lemma \ref{3.2} and lemma \ref{3.1}, we get
\begin{align}
R(N_{1},N_{2})\geq (0.00089051-6.2809957\lambda^{k-2})(1-\varepsilon)N_{1}^{\frac{11}{9}}N_{2}^{\frac{11}{9}}L^{k}.\notag
\end{align}
Recall that $\lambda=0.961917$. When $k\geq 231$ and $\varepsilon=10^{-10}$, we have
\begin{align}\label{l2}
R(N_{1},N_{2})> 0.\notag
\end{align}
for sufficiently large integers $N_{1}$ and $N_{2}$ satisfying the given condition.

\bibliographystyle{plain}
\bibliography{ck-bib}

\begin{thebibliography}{10}

\bibitem{2020c}
Yong Cai and Liqun Hu.
\newblock A pair of equations in unlike powers of primes and powers of 2.
\newblock {\em Open Math.}, 18(1):662--670, 2020.

\bibitem{2002h}
D.~R. Heath-Brown and J.-C. Puchta.
\newblock Integers represented as a sum of primes and powers of two.
\newblock {\em Asian J. Math.}, 6(3):535--565, 2002.

\bibitem{2016h}
Liqun Hu and Li~Yang.
\newblock On pairs of equations in one prime, two prime squares and powers of
  2.
\newblock {\em J. Number Theory}, 160:451--462, 2016.

\bibitem{2017h}
Liqun Hu and Li~Yang.
\newblock On pairs of equations in unlike powers of primes and powers of 2.
\newblock {\em Open Math.}, 15(1):1487--1494, 2017.

\bibitem{2013k}
Yafang Kong.
\newblock On pairs of linear equations in four prime variables and powers of
  two.
\newblock {\em Bull. Aust. Math. Soc.}, 87(1):55--67, 2013.

\bibitem{2017k}
Yafang Kong and Zhixin Liu.
\newblock On pairs of {G}oldbach-{L}innik equations.
\newblock {\em Bull. Aust. Math. Soc.}, 95(2):199--208, 2017.

\bibitem{2006k}
Angel~V. Kumchev.
\newblock On {W}eyl sums over primes and almost primes.
\newblock {\em Michigan Math. J.}, 54(2):243--268, 2006.

\bibitem{2007l}
Hongze Li.
\newblock Representation of odd integers as the sum of one prime, two squares
  of primes and powers of 2.
\newblock {\em Acta Arith.}, 128(3):223--233, 2007.

\bibitem{1951l}
Yu.~V. Linnik.
\newblock Prime numbers and powers of two.
\newblock {\em Trudy Nat. Inst. Steklov.}, 38:152--169, 1951.

\bibitem{1953l}
Yu.~V. Linnik.
\newblock Addition of prime numbers with powers of one and the same number.
\newblock {\em Mat. Sbornik N.S.}, 32(74):3--60, 1953.

\bibitem{2001l}
J.~Liu and M.-C. Liu.
\newblock Representation of even integers by cubes of primes and powers of 2.
\newblock {\em Acta Math. Hungar.}, 91(3):217--243, 2001.

\bibitem{1999l}
Jianya Liu, Ming-Chit Liu, and Tao Zhan.
\newblock Squares of primes and powers of {$2$}.
\newblock {\em Monatsh. Math.}, 128(4):283--313, 1999.

\bibitem{1998w}
Jianya Liu, Mingchit Liu, and Tianze Wang.
\newblock The number of powers of {$2$} in a representation of large even
  integers. {II}.
\newblock {\em Sci. China Ser. A}, 41(12):1255--1271, 1998.

\bibitem{2011l1}
Z.~X. Liu and G.~S. L\"{u}.
\newblock On unlike powers of primes and powers of 2.
\newblock {\em Acta Math. Hungar.}, 132(1-2):125--139, 2011.

\bibitem{2013l}
Zhixin Liu.
\newblock On pairs of one prime, two prime squares and powers of 2.
\newblock {\em Int. J. Number Theory}, 9(6):1413--1421, 2013.

\bibitem{2011l2}
Zhixin Liu and Guangshi L\"{u}.
\newblock Two results on powers of 2 in {W}aring-{G}oldbach problem.
\newblock {\em J. Number Theory}, 131(4):716--736, 2011.

\bibitem{2001r}
Xiumin Ren.
\newblock Density of integers that are the sum of four cubes of primes.
\newblock {\em Chinese Ann. Math. Ser. B}, 22(2):233--242, 2001.

\bibitem{2003r}
Xiumin Ren.
\newblock Sums of four cubes of primes.
\newblock {\em J. Number Theory}, 98(1):156--171, 2003.

\bibitem{1997v}
R.~C. Vaughan.
\newblock {\em The {H}ardy-{L}ittlewood method}, volume 125 of {\em Cambridge
  Tracts in Mathematics}.
\newblock Cambridge University Press, Cambridge, second edition, 1997.

\end{thebibliography}

 \end{document}